\newtheorem {theorem}{Theorem}
\newtheorem {corollary}[theorem]{Corollary}
\newtheorem {definition}[theorem]{Definition}
\newtheorem {exercise}[theorem]{Exercise}
\newtheorem {lemma}[theorem]{Lemma}
\newtheorem {proposition}[theorem]{Proposition}
\newtheorem {remark}[theorem]{Remark}
\newenvironment {proof}[1][Proof]{\noindent \textbf {#1.} }{\ \rule {0.5em}{0.5em}}
\begin{document}
\begin{center}{\Large Primary Decomposition in Boolean Rings}\end{center}\par
\begin{center}
David C. Vella, \end{center}\par
\begin{center}Skidmore College\end{center}\par

{\textbf{1. Introduction}
}
Let $R$ be a commutative ring with identity. The Lasker-Noether theorem on primary decomposition states that if $R$ is Noetherian, every ideal of $R$ can be expressed as a finite intersection of primary ideals (defined below) in such a way that the decomposition is irredundant, and there is a certain amount of uniqueness to the decomposition (such decompositions are called \emph{reduced} or \emph{normal}.) For details of the proof of this theorem, see [ZS] or [N] for ideals, and [H] or [AM] for a more modern treatment that applies to $R$-modules as well as ideals.  This beautiful theorem is at once a generalization of the fundamental theorem of arithmetic for the integers, which states that every integer is a product of prime powers in an essentially unique way, and at the same time it is an algebraic version of the geometric fact that algebraic varieties are unions of a finite number of irreducible varieties. \bigskip

The beginning commutative algebra student who studies this theorem may want to see an example of an ideal in a ring which does \textit{not} have a primary decomposition. Such a counterexample must of course come from a ring which is not Noetherian, but in what ring should one look?  \bigskip 

There is a canonical example of such an ideal, which is alluded to in exercise 6 of Chapter 4 in [AM].  Let $X$ be an infinite compact Hausdorff space, and $R$ be the ring of continuous real-valued functions $R =\mathcal{C}[X ,\mathbb{R}]$ on $X .$  The essence of the example is that (i), for each $x \in X ,$ the set $m_{x} =\{f \in R$ \textbar{} $f(x) =0\}$ is a maximal ideal of $R ,$ (ii) each $m_{x}$ contains a minimal prime ideal $p_{x}$, (iii) The zero ideal can be written as the intersection $(0) = \cap _{x \in X}p_{x} ,$ and (iv) if $x \neq y ,$ then $p_{x} \neq p_{y} ,$ which means the decomposition in (iii) is infinite and can't be refined to a finite, reduced decomposition.  Most of the details are elementary, but to prove point (iv), one must rely on Urysohn's lemma from topology (see Exercise 26 of Chapter 1 of [AM]). \bigskip

A beautiful counterexample to be sure, yet there is something unsatisfying about it. Why should this seemingly purely algebraic question seem to rest on a topological result?  Could we not find a counterexample which is more algebraic (and perhaps more elementary)? \bigskip

The main result of this note is the following: \bigskip

\begin{theorem}
Let $X$ be a set, and let $R =\mathcal{P}(X)$ be the power set of $X ,$ which is a Boolean ring with addition in $R$ being the symmetric difference of sets, and multiplication being the intersection of sets.  Then the zero ideal $(0)$ has a reduced primary decomposition in $R$ if and only if $X$ is finite. \bigskip 
\end{theorem}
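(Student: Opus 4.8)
The plan is to first reduce the problem to pure ideal theory by pinning down what primary ideals look like in a Boolean ring, and then to settle each direction separately: the finite case by an explicit irredundant decomposition, and the infinite case by a cardinality argument.

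The first step is the observation that makes everything elementary. In any Boolean ring every element $a$ satisfies $a^2 = a$, so in the definition of a primary ideal the condition ``$b^m \in q$ for some $m$'' collapses to ``$b \in q$'', and the primary ideals are precisely the prime ideals. Moreover, for a prime $p$ the quotient $R/p$ is a Boolean integral domain, and the only such domain is $\mathbb{F}_2$: if $\bar a \neq 0$ then $\bar a(\bar a - 1) = 0$ forces $\bar a = 1$. Hence every prime is maximal with residue field $\mathbb{F}_2$, and a reduced primary decomposition of $(0)$ is exactly a finite, irredundant intersection of distinct maximal ideals equal to $(0)$ (distinct radicals meaning distinct primes, since primes are their own radicals).

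Next I would exhibit the relevant maximal ideals concretely. For each $x \in X$ set $p_x = \{A \in R : x \notin A\}$; the evaluation map sending $A$ to $1$ if $x \in A$ and to $0$ otherwise is a ring homomorphism onto $\mathbb{F}_2$ with kernel $p_x$, so each $p_x$ is maximal, and $\bigcap_{x \in X} p_x = (0)$ because the only set containing no point of $X$ is $\emptyset$. For the forward direction, when $X = \{x_1,\dots,x_n\}$ is finite I would take $(0) = \bigcap_{i=1}^n p_{x_i}$. These primes are pairwise distinct, and the decomposition is irredundant because the singleton $\{x_k\}$ lies in every $p_{x_i}$ with $i \neq k$ but not in $p_{x_k}$; thus omitting any one term strictly enlarges the intersection.

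The substance of the theorem is the converse, and I expect the clean route to be a cardinality count rather than any delicate manipulation of ideals. Suppose $(0)$ admitted any finite primary decomposition $(0) = \bigcap_{i=1}^n q_i$; by the first step each $q_i$ is maximal with $R/q_i \cong \mathbb{F}_2$. The canonical map $R \to \prod_{i=1}^n R/q_i \cong \mathbb{F}_2^{\,n}$ has kernel $\bigcap_i q_i = (0)$ and is therefore injective, forcing $|R| \le 2^n$. But $|R| = |\mathcal{P}(X)| = 2^{|X|}$, so this can happen only when $X$ is finite (in fact $|X| \le n$). Hence if $X$ is infinite there is no finite primary decomposition of $(0)$ at all, and a fortiori no reduced one. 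The main obstacle is conceptual rather than computational: recognizing that every residue field here is $\mathbb{F}_2$, after which the injection $R \hookrightarrow \mathbb{F}_2^{\,n}$ turns ``finite decomposition'' directly into ``finite ring.''
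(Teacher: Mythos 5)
Your proposal is correct, and your forward (finite) direction coincides with the paper's: both exhibit $(0)=\bigcap_{x\in X} m_{x}$ as the intersection of the point ideals and verify irredundancy using singletons. For the converse, however --- which is the heart of the theorem --- you take a genuinely different route. The paper fixes $x\in X$, notes that the prime $m_{x}$ contains $\bigcap_{j=1}^{n}Q_{j}=(0)$, and invokes its Lemma 11 (a prime containing a finite intersection of ideals contains one of them, [AM] Prop.\ 1.11(ii)); since primary ideals in a Boolean ring are maximal, that factor must \emph{equal} $m_{x}$, and this produces an injection $X\rightarrow\{1,\dots,n\}$, using reducedness and the fact that $x\neq y$ implies $m_{x}\neq m_{y}$. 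You instead embed the whole ring: the canonical map $R\rightarrow\prod_{i=1}^{n}R/q_{i}\cong\mathbb{F}_{2}^{\,n}$ is injective because its kernel is $\bigcap_{i}q_{i}=(0)$, so $|R|=2^{|X|}\leq 2^{n}$. Your argument trades Lemma 11 for the (easy) computation that every Boolean integral domain is $\mathbb{F}_{2}$, and it needs neither the reducedness hypothesis nor the distinctness of the $m_{x}$ in this direction; consequently it proves the slightly stronger statement that $(0)$ admits no finite primary decomposition whatsoever when $X$ is infinite, reduced or not. What the paper's argument buys in exchange is structural information: it shows that every $m_{x}$ must actually occur among the $Q_{j}$, i.e.\ the reduced decomposition, when it exists, is forced to be exactly $\bigcap_{x\in X}m_{x}$, which is what feeds the uniqueness discussion in Remark 15. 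Both arguments yield the quantitative bound $|X|\leq n$.
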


Thus, if we take an infinite set, such as $\mathbb{N}$ for example, then the zero ideal in $R =\mathcal{P}(\mathbb{N})$ does not have a primary decomposition.  We will phrase part of our argument in terms of arbitrary Boolean rings, and indeed much of what we prove in the case of $R =\mathcal{P}(X)$ will generalize to all Boolean rings.  But we would like to work with this specific Boolean ring so that at the end, we can show the connection between our counterexample and the standard counterexample from [AM] mentioned above. \bigskip 

{\textbf{2.   Maximal, Prime, and Primary Ideals in a Commutative Ring.}}
We review some basic notions of commutative algebra in order to give a more precise statement of the Lasker-Noether theorem.  Readers familiar with these concepts may skip to the next section. 

Commutative algebra is the study of ideals in commutative rings.  We assume familiarity with the notions of commutative rings and ideals, and basic facts about quotient rings $R/I$ such as the first isomorphism theorem ([G], theorem 15.3.)  All commutative rings $R$ in this paper will contain a multiplicative identity element $1.$ There are a few special types of ideals to single out, most of which are encountered in a typical first course of abstract algebra.  For those readers needing a review, one standard undergraduate reference is [G]. Any notions not found there can be found in [H] or [AM]. \bigskip

\begin{definition}
A proper Ideal $I$ in a commutative ring $R$ is a \emph{maximal ideal} if whenever $J$ is an ideal such that $I \subseteq J \subseteq R ,$ then either $J =I$ or $J =R .$
\end{definition}

We remind the reader of the following standard result about commutative rings: \bigskip

\begin{lemma}
In a commutative ring $R$ every proper ideal is contained in at least one maximal ideal.
\end{lemma}

\begin{proof}
This is a standard application of Zorn's lemma.  See (1.3 ) and (1.4) of [AM]. \bigskip 
\end{proof}

We also remind the reader of the fact that $I$ is maximal if and only if $R/I$ is a field.  (See, for example, Theorem 14.4 of [G].) \bigskip

\begin{definition}
A proper ideal in $R$ is a\emph{ prime ideal }if it has the property that whenever $ab \in I$ and $a \notin I ,$ then $b \in I .$
\end{definition}

\begin{definition}
A proper ideal $I$ in $R$ is a \emph{primary ideal} if it has the property that whenever $ab \in I$ and $a \notin I ,$ then $b^{n} \in I$ for some $n >0.$
\end{definition}

\begin{definition}
Given an ideal $I ,$ the \emph{radical} of $I ,$ denoted $\sqrt{I} ,$ is $\{x \in R$ \textbar{} $x^{n} \in I$ for some $n >0\} .$ It is also an ideal of $R$.  An ideal with the property that $\sqrt{I} =I$ is said to be a \emph{radical ideal.}
\end{definition}

Elementary facts about these types of ideals include the following:

\begin{itemize}
\item $I \subseteq \sqrt{I}$

\item
Every maximal ideal is a prime ideal

\item
Every prime ideal is a primary ideal

\item
Every prime ideal is a radical ideal

\item
If $I$ is primary, then $\sqrt{I}$ is a prime ideal\end{itemize}

The proofs of these claims are easy exercises, or can be found in the usual references such as [G], [H], [AM], or [N]. \bigskip 

Let $a_{1} ,a_{2} , . . . ,a_{n}$ be elements of $R .$  We use the usual notation $(a_{1 ,}a_{2} , . . . ,a_{n})$ to stand for the ideal generated by these elements, which is the set of linear combinations $\{\sum r_{i}a_{i}$ \textbar{} $r_{i} \in R\} .$  The set $\{a_{1} ,a_{2} , . . . ,a_{n}\}$ are called the generators of $I =(a_{1} , . . . ,a_{n}) .$  If $I$ has a finite set of generators, we say it is \emph{finitely generated}, and if $I =(a)$ has a single generator, we say it is a \emph{principal ideal}.  \bigskip

The ring $\mathbb{Z}$ of integers is something of a special case.  In $\mathbb{Z}$, every ideal is principal (see [G], theorem 18.4), and an ideal $I =(m)$ is prime if and only if it is generated by a prime number $m =p$, while it is primary if and only if it is generated by a power of a prime number $m =p^{r};$ for some $r >0.$  Also, in $\mathbb{Z} ,$ every prime ideal is maximal, which is certainly not true in general.  But there are some other rings in which every prime ideal is maximal, as we shall see. \bigskip

We define a commutative ring $R$ to be \emph{Noetherian} if every ideal of $R$ is finitely generated.  This definition is equivalent to the usual definition in terms of ascending chains of ideals (see [H], Chapter VIII, theorem 1.9.) \bigskip

The following example will illustrate the above notions.  In $\mathbb{Z} ,$ the fundamental theorem of arithmetic says that every integer factors uniquely as a product of prime elements.  For example, the unique decomposition of $360$ is:
\begin{equation}360 =2^{3} \cdot 3^{2} \cdot 5
\end{equation}

Now consider the ideal $(360) .$  We leave it as an (easy!) exercise to verify that:\begin{equation}(360) =(2^{3}) \cap (3^{2}) \cap (5)
\end{equation}

This is an example of primary decomposition.  The ideals $(2^{3}) =(8) ,$ $(3^{2}) =(9) ,$ and $(5)$ are primary ideals.  Note that their radicals are prime: $\sqrt{(8)} =(2) ,$ $\sqrt{(9)} =(3) ,$ and $\sqrt{(5)} =(5)$ since $(5)$ is already prime and hence a radical ideal.  Thus the unique prime factorization of the element $360$ leads to a unique primary decomposition of the ideal $(360) ,$ and this example makes it clear that the same holds for any ideal in $\mathbb{Z} .$

Of course, unique factorization of elements is a rare thing in commutative rings.  The beauty of the Lasker-Noether theorem is that even if unique factorization fails for elements of the ring, then (as long as the ring is Noetherian), we still have primary decomposition of ideals.  Emanuel Lasker proved this first for polynomial rings in 1905, and Emmy Noether generalized it to all Noetherian rings in 1921.  In the general statement of the theorem, the uniqueness of the decomposition is more complicated than in the case for $\mathbb{Z} .$  In general, if \begin{equation}I =Q_{1} \cap Q_{2} \cap  . . . \cap Q_{n}
\end{equation}

is a decomposition of $I$ into primary ideals $Q_{i} ,$ let $P_{i} =\sqrt{Q_{i}}$ be the radical of $Q_{i}$.  As noted above, the $P_{i}$ are prime ideals.  However, unlike the case when $R =\mathbb{Z} ,$ these $P_{i}$ are not necessarily maximal ideals, and so it is possible that there may be containment relations between them.  It is also possible that several different $Q_{i}$ have the same radical - however, in a \textit{reduced} primary decomposition, the $n$ is minimal (in particular, no $Q_{i}$ contains the intersection of the other primary ideals in the decomposition), and the $P_{i}$ are distinct.  Then the statement of the theorem is:

\begin{theorem}
(Lasker-Noether)  If $R$ is a Noetherian ring, then every ideal has a reduced primary decomposition $I = \cap _{i =1}^{n}Q_{i}$. $Let P_{i} =\sqrt{Q_{i}}$ be the radicals of the factors.  In general, the primary decomposition is not unique.  However, the $\{P_{i}\}$ are uniquely determined by $I ,$ and furthermore, the $Q_{i}$ which correspond to the minimal elements of the set $\{P_{i}\}$ are also uniquely determined by $I .$
\end{theorem}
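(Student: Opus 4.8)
The plan is to establish the Lasker--Noether theorem in four stages: existence of a primary decomposition, reduction to a reduced one, the first uniqueness statement about the radicals, and the second uniqueness statement about the isolated components. Throughout I would use the equivalent formulation of the Noetherian hypothesis as the ascending chain condition (ACC) on ideals, equivalently the statement that every nonempty collection of ideals has a maximal element.

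For existence I would introduce the notion of an \emph{irreducible} ideal, meaning one that is not the intersection of two strictly larger ideals, and prove two lemmas. The first, by Noetherian induction, is that every ideal is a finite intersection of irreducible ideals: if the set $\Sigma$ of ideals failing this were nonempty it would contain a maximal element $I$, which cannot itself be irreducible, so $I = J \cap K$ with $J,K \supsetneq I$; maximality forces $J,K \notin \Sigma$, whence $I \notin \Sigma$, a contradiction. The second lemma, which is the technical heart of the matter, is that in a Noetherian ring every irreducible ideal is primary. To prove it I would suppose $ab \in I$ with $a \notin I$, form the ascending chain of colon ideals $(I : b) \subseteq (I : b^{2}) \subseteq \cdots$, invoke ACC to stabilize it at some exponent $n$, and then verify the identity $I = (I + (b^{n})) \cap (I + (a))$; since $a \notin I$ makes the second factor strictly larger than $I$, irreducibility forces $I + (b^{n}) = I$, that is, $b^{n} \in I$. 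Combining the two lemmas produces a finite primary decomposition of every ideal.

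To make the decomposition reduced I would use the elementary fact that a finite intersection of primary ideals with a common radical $P$ is again $P$-primary; grouping the components by radical collapses repeated primes, after which one deletes any component containing the intersection of the others, repeating until the decomposition is irredundant and the radicals $P_{i}$ are distinct. For the first uniqueness statement I would establish the decomposition-independent characterization that a prime $P$ appears among the $\{P_{i}\}$ if and only if $P = \sqrt{(I : x)}$ for some $x \in R$; the computation $\sqrt{(I : x)} = \bigcap_{x \notin Q_{i}} P_{i}$ shows that the primes arising this way are exactly the $P_{i}$, so the set $\{P_{i}\}$ depends on $I$ alone. Finally, for the components attached to the minimal elements of $\{P_{i}\}$ I would localize at such a prime $P_{i}$ and show that the corresponding $Q_{i}$ equals $\{x \in R : sx \in I \text{ for some } s \notin P_{i}\}$, an object manifestly determined by $I$ and $P_{i}$.

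I expect the main obstacle to be the lemma that irreducible ideals are primary, since it is the step that uses the Noetherian hypothesis essentially, through the stabilization of the colon chain, and it relies on the slightly delicate identity $I = (I + (b^{n})) \cap (I + (a))$, whose verification requires the stabilization exponent $n$ in a genuine way. The uniqueness of the isolated components is the other subtle point, as it properly requires localization rather than the more elementary manipulations used for existence and for the first uniqueness statement; a reader preferring to avoid localization could instead take the displayed set as the definition of $Q_{i}$ and check primariness and the intersection formula directly.
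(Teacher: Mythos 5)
The paper never proves this theorem: it is stated as background, with the proof explicitly deferred to the references [ZS], [N], [H], and [AM], so there is no internal proof to compare against. Your sketch is correct, and it is essentially the standard argument found in those sources. The existence half (Noetherian induction to write every ideal as a finite intersection of irreducible ideals, plus the lemma that an irreducible ideal in a Noetherian ring is primary) is exactly the route of [AM, Chapter 7] and [H, Chapter VIII]; your verification of the key identity $I = (I + (b^{n})) \cap (I + (a))$ via the stabilized colon chain $(I:b) \subseteq (I:b^{2}) \subseteq \cdots$ is the right computation, since any $z$ in the intersection satisfies $zb \in I$ (using $ab \in I$), which forces the coefficient of $b^{n}$ into $(I:b^{n+1}) = (I:b^{n})$. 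The two uniqueness statements via $\sqrt{(I:x)} = \bigcap_{x \notin Q_{i}} P_{i}$ and via the saturation $\{x \in R : sx \in I \text{ for some } s \notin P_{i}\}$ at a minimal prime are likewise the standard first and second uniqueness theorems. One organizational refinement worth making: the Noetherian hypothesis is used essentially in \emph{both} existence lemmas (the Noetherian induction requires a maximal element of $\Sigma$ just as genuinely as the colon chain requires stabilization), whereas both uniqueness statements need no Noetherian hypothesis at all --- they hold for any ideal that admits some finite primary decomposition. This is precisely how [AM] arranges the material (uniqueness in Chapter 4, existence in Chapter 7), and that separation matters for the present paper, whose main theorem concerns a non-Noetherian ring in which existence fails but where the uniqueness theory would still apply to any ideal that happened to decompose.
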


In particular, if there are no containments among the primes $\{P_{i}\} ,$ then every $P_{i}$ is minimal in the set $\{P_{i}\} .$  In this case, all the $Q_{i}$ are uniquely determined by $I ,$ so that the reduced primary decomposition of $I$ is unique in this case.  Notice that if $R$ is a ring in which every prime ideal is maximal (such as $\mathbb{Z}$), then there cannot be any containments between distinct prime ideals, so in such a ring, a reduced primary decomposition of any ideal is always unique.

Readers interested in seeing examples of ideals with several distinct reduced primary decompositions may consult ([H], Chapter VIII, Section 2, exercise 18.) \bigskip 

\textbf{3. Boolean Rings and Power Set Rings}.  A \emph{Boolean Ring} is a ring $R$ with identity $1$ such that every element $x \in R$ is \emph{idempotent}; that is, $x^{2} =x$ for all $x \in R .$  We summarize standard facts about Boolean rings in the following:

\begin{proposition}
  Let $R$ be a Boolean ring.  Then:

1) $R$ has characteristic $2;$ that is, $2x =0$ for all $x \in R .$

2) $R$ is commutative.

3) Every primary ideal in $R$ is in fact a prime ideal.

4) Every prime ideal in $R$ is in fact maximal.

5) Every finitely generated ideal in $R$ is in fact principal.
\end{proposition}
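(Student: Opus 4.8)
The plan is to establish the five claims essentially in the order given, since the later parts lean on the earlier ones. The engine behind (1) and (2) is the identity $x^{2}=x$ applied to cleverly chosen sums. For (1), I would square $1+1$: since $1+1$ is idempotent, $(1+1)^{2}=1+1$, while expanding the left side gives $1+1+1+1$; comparing yields $1+1=0$, and then $2x=x(1+1)=0$ for every $x$, so $R$ has characteristic $2$. (Alternatively one can square $x+x$ directly.) For (2), I would square a general sum $x+y$: idempotence gives $(x+y)^{2}=x+y$, and expanding the left side — being careful not to assume commutativity, which is exactly what we are proving — gives $x^{2}+xy+yx+y^{2}=x+xy+yx+y$. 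Cancelling $x+y$ leaves $xy+yx=0$, i.e. $xy=-yx$, and since $-yx=yx$ by characteristic $2$, we conclude $xy=yx$.

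For (3), the key observation is that idempotence propagates: from $b^{2}=b$ one gets $b^{n}=b$ for every $n\geq 1$ by an immediate induction. So if $I$ is primary and $ab\in I$ with $a\notin I$, the primary condition yields $b^{n}\in I$ for some $n>0$, whence $b=b^{n}\in I$; this is precisely the prime condition. For (4), I would pass to the quotient: if $P$ is prime then $R/P$ is an integral domain, and it is again a Boolean ring. In a Boolean integral domain every $x$ satisfies $x(x-1)=x^{2}-x=0$, so $x=0$ or $x=1$; hence $R/P\cong\mathbb{F}_{2}$, a field, and therefore $P$ is maximal by the standard characterization quoted above. (A direct alternative avoids quotients: for $x\notin P$ one has $x(1-x)=0\in P$, so primeness forces $1-x\in P$, and then $1=x+(1-x)\in(P,x)$, showing $(P,x)=R$.)

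The part I expect to be the main obstacle — or at least the least mechanical — is (5), since it requires guessing the right single generator. The claim reduces by induction to the two-generator case, so the crux is to show that $(a,b)$ is principal. I would verify that $(a,b)=(c)$ where $c=a+b+ab$. The containment $(c)\subseteq(a,b)$ is clear. For the reverse, the computation $ac=a^{2}+ab+a^{2}b=a+ab+ab=a+2ab=a$ (using idempotence and characteristic $2$) shows $a\in(c)$, and symmetrically $bc=b$ gives $b\in(c)$; hence $(a,b)\subseteq(c)$. Iterating, $(a_{1},\dots,a_{n})=(d,a_{n})$ where $d$ is a single generator of $(a_{1},\dots,a_{n-1})$, and the two-generator result collapses this to a principal ideal. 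Once the generator $c$ is found the verification is routine; finding it is the only genuinely nonobvious step.
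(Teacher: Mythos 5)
Your proof is correct, and on parts (1)--(3) it coincides with the paper's argument in all essentials: squaring a sum to extract characteristic $2$ and commutativity, and using $y^{n}=y$ to collapse the primary condition to the prime condition. The two places where you genuinely diverge are (4) and (5). For (4), the paper argues directly with elements: given a prime $P$ and an ideal $Q\supsetneq P$, it picks $x\in Q\setminus P$, notes $x(1-x)=0\in P$, concludes $1-x\in P\subseteq Q$, and hence $1\in Q$. Your main route instead passes to the quotient: $R/P$ is a Boolean integral domain, in which $x(x-1)=0$ forces $x\in\{0,1\}$, so $R/P\cong\mathbb{Z}_{2}$ is a field and $P$ is maximal. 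This is a more conceptual argument (and your parenthetical alternative is precisely the paper's), at the cost of invoking the correspondence between maximal ideals and field quotients, which the paper has in any case already quoted. For (5), you and the paper use the same key generator $c=a+b+ab$, but your verification of $(a,b)\subseteq(c)$ is cleaner: you check $ac=a$ and $bc=b$, so both generators lie in $(c)$, whereas the paper expands a general element $ax+by$ and manipulates it into the form $(ax+by)(x+y+xy)$. Your version buys a shorter computation and makes the structural point (it suffices to capture the generators) explicit; the paper's version exhibits the explicit cofactor for an arbitrary element. Both inductions handling $n>2$ generators are the same.
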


\begin{proof}
  These results are well-known, but we include the proofs to make this note more self-contained.  

For (1), use the fact that $(x +x)^{2} =x +x ,$ whence expanding the left side gives $4x^{2} =2x ,$ or $4x =2x$ since $x^{2} =x .$  Therefore, $2x =0$ for any $x .$  

For (2), expand $(x +y)^{2} =x +y$ to obtain $x^{2} +xy +yx +y^{2} =x +y ,$ or $xy +yx =0 ,$ since $x^{2} =x$ and $y^{2} =y .$  Thus, $xy = -yx ,$ but by part (1), $ -yx =yx ,$ so $xy =yx .$

For (3) recall that an ideal $I$ is \emph{primary} if $xy \in I$ and $x \notin I$ imply $y^{n} \in I$ for some $n >0.$  But in a Boolean ring, an easy induction gives that $y^{n} =y$ for all $n >0.$   Thus, in a Boolean ring, if $I$ is primary, then $xy \in I$  and $x \notin I$ imply $y \in I ,$ which is precisely the definition of a \emph{prime} ideal.

For (4), Let $P$ be a prime ideal, and let $Q$ contain $P$ properly.  Then there is an element $x \in Q$ such that $x \notin P .$  But $x =x^{2} ,$ whence $x -x^{2} =0;$ that is,  we have $x(1 -x) =0 \in P$ and $x \notin P .$  Since $P$ is prime, this yields $1 -x \in P \subseteq Q ,$ and since $x \in Q ,$ this gives, $x +(1 -x) =1 \in Q ,$ so $Q =R .$  Thus $P$ is maximal.

Finally, for (5) Let $I =(x_{1} ,x_{2} , . . . ,x_{n})$ be a finitely generated ideal.  By induction we can reduce to the case $n =2.$  But $(x ,y) =(x +y +xy) .$  Indeed the containment $ \supseteq $ is obvious.  For the reverse, using the idempotence of elements and the fact that $R$ has characteristic $2 ,$ we have:

\begin{align}ax +by =ax^{2} +by^{2} =ax^{2} +axy +axy +by^{2} +bxy +bxy \\
 =ax^{2} +axy +ax^{2}y +by^{2} +bxy +bxy^{2} \\
 =ax(x +y +xy) +by(x +y +xy) \\
 =(ax +by)(x +y +xy)\end{align}

showing $(x ,y) \subseteq (x +y +xy) .$  This completes the proof.
\end{proof}

\bigskip The following are examples of Boolean rings:\bigskip

1) The field $\mathbb{Z}_{2}$ of integers mod $2.$  (It is the only Boolean ring which is a field.) \medskip

2) The direct product $\mathbb{Z}_{2}$$ \times \mathbb{Z}_{2} \times  . . . \times \mathbb{Z}_{2}$ of any number of copies of $\mathbb{Z}_{2} .$ \medskip

3) Let $X$ be any set, then as mentioned in the introduction, $R =\mathcal{P}(X)$, the power set of $X$ is a Boolean ring.  Here $A +B =(A \cup B) -(A \cap B)$ is the symmetric difference of $A$ and $B ,$ and $AB =A \cap B$ is the intersection.  The empty set $ \varnothing $ is the $0$ in this ring and the set $X$ is the multiplicative identity $1.$  It is easy to see that $1$ is the only unit in this ring, and $R$ is never an integral domain unless $X$ is a singleton set, in which case $R$ is isomorphic to $\mathbb{Z}_{2}$. \medskip

4) Let $X$ be an infinite set, and consider the subset $Fin(X) \subseteq \mathcal{P}(X)$ consisting of all finite subsets of $X .$  Then this is a Boolean ring - in fact, we leave the easy proof that it is a (proper) ideal of $\mathcal{P}(X)$ to the reader, and it is generated by all the singleton sets in $\mathcal{P}(X) .$  Furthermore, none of the singletons can be omitted from the generating set, so this is an example of an ideal in $\mathcal{P}(X)$ which is not finitely generated.  In particular, when $X$ is infinite, $\mathcal{P}(X)$ is not Noetherian.  On the other hand, when $X$ is finite, so is $\mathcal{P}(X) ,$ so it is rather trivially a Noetherian ring in this case.

\begin{exercise}
If $A \subseteq X ,$ then the principal ideal $(A)$ generated by $A$ in $\mathcal{P}(X)$ is equal to $\mathcal{P}(A)$ (not just isomorphic to $\mathcal{P}(A)$, but equal to it!)
\end{exercise}

Note that if $X$ is finite, then as noted above $R =\mathcal{P}(X)$ is Noetherian, and so by part (5) of Proposition 8 above, $R$ is a principal ideal ring (but not a domain) in this case.  In the case $X$ is finite, with $n$ elements, after doing exercise 9, the reader would probably speculate that the maximal ideals in $R$ must all have the form $\mathcal{P}(A)$, where $A$ is a subset with $n -1$ elements.  This is correct: \bigskip

\begin{proposition}
Let $R =\mathcal{P}(X)$ be a power set ring. If $x \in X ,$ there is a maximal principal ideal of the form $\mathcal{P}(X -\{x\})$.  Denote this ideal by $m_{x} .$  If $X$ is finite, then every maximal ideal of $R$ is one of the $m_{x} .$  If $X$ is infinite, there exist maximal ideals of $R$ which are not of the form $m_{x} .$  Also, $x \neq y$ implies that $m_{x} \neq m_{y} .$
\end{proposition}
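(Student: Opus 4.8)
The plan is to treat the four assertions in order of increasing difficulty. The distinctness claim is immediate: if $x \neq y$, then the singleton $\{y\}$ lies in $m_x = \mathcal{P}(X - \{x\})$ (since $x \notin \{y\}$) but not in $m_y$ (since $y \in \{y\}$), so $m_x \neq m_y$. For the principality and maximality of $m_x$, I would first invoke Exercise 9 to identify $\mathcal{P}(X - \{x\})$ with the principal ideal $(X - \{x\})$, which settles principality. For maximality I would avoid checking the definition directly and instead exhibit a map $\phi_x : \mathcal{P}(X) \to \mathbb{Z}_2$ sending $A$ to $1$ if $x \in A$ and to $0$ otherwise. A short verification that $\phi_x$ carries symmetric difference to addition (it reads off whether $x$ lies in exactly one of the two sets) and intersection to multiplication shows it is a surjective ring homomorphism, and plainly $\ker \phi_x = \{A : x \notin A\} = m_x$. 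Since $\mathbb{Z}_2$ is a field, the first isomorphism theorem gives $R/m_x \cong \mathbb{Z}_2$, and by the remark that $I$ is maximal iff $R/I$ is a field, $m_x$ is maximal.

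For the finite case, let $X$ be finite and let $M$ be any maximal ideal. By Proposition 8(4) every maximal ideal is prime, so I may use primeness freely. The key observation is that for distinct $x,y$ one has $\{x\}\{y\} = \varnothing = 0 \in M$, so primeness forces $\{x\} \in M$ or $\{y\} \in M$; hence at most one singleton can fail to lie in $M$. On the other hand, not every singleton can lie in $M$: since $X$ is finite, $X$ is the symmetric difference of its (pairwise disjoint) singletons, so if every $\{x\} \in M$ then $1 = X \in M$, contradicting properness. Therefore exactly one singleton, say $\{x_0\}$, lies outside $M$, while $\{y\} \in M$ for every $y \neq x_0$. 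Any set $A$ avoiding $x_0$ is a finite symmetric difference of such singletons and so lies in $M$; thus $m_{x_0} = \mathcal{P}(X - \{x_0\}) \subseteq M$. Since $m_{x_0}$ is maximal and $M$ is proper, $M = m_{x_0}$.

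For the infinite case I would produce a maximal ideal avoiding every singleton by a soft argument. When $X$ is infinite, the finite subsets $Fin(X)$ form a proper ideal of $\mathcal{P}(X)$ (Example 4), so by Lemma 3 it is contained in some maximal ideal $M$. Every singleton $\{x\}$ is finite, hence lies in $Fin(X) \subseteq M$; but $\{x\} \notin m_x$, so $M$ can equal none of the $m_x$, which is exactly what is required.

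The one genuinely non-elementary ingredient is this final step: the existence of the ``extra'' maximal ideal rests on Lemma 3 and hence on Zorn's lemma, and is really the algebraic shadow of a non-principal ultrafilter on $X$. Everything else is finite and elementary; the only place that demands care is the finite case, where one must correctly combine the two halves of the singleton analysis — primeness giving \emph{at most} one singleton outside $M$, and finiteness giving \emph{not all} singletons inside $M$ — to pin down $M$ exactly.
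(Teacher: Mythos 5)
Your proof is correct, and three of its four parts coincide with the paper's own argument: the maximality of $m_{x}$ via the surjective homomorphism $A \mapsto \chi _{A}(x)$ onto $\mathbb{Z}_{2}$ with kernel $m_{x}$, the distinctness claim via a singleton witness, and the infinite case via $Fin(X)$ being proper and Lemma 3. Where you genuinely diverge is the finite case. The paper never uses primeness of maximal ideals there; instead it argues by contradiction that a maximal ideal $m$ contained in no $m_{x}$ must contain, for each $x ,$ some set $A_{x}$ with $x \in A_{x} ,$ hence contains $\{x\} =\{x\} \cap A_{x}$ for every $x ,$ hence contains all of $Fin(X)$ --- which equals $\mathcal{P}(X)$ when $X$ is finite, contradicting properness. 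Your route instead exploits the orthogonality of singletons: $\{x\}\{y\} =0 \in M$ for $x \neq y ,$ so primeness leaves at most one singleton outside $M ,$ while properness of $M$ and finiteness of $X$ force at least one singleton outside; the unique excluded singleton $\{x_{0}\}$ then gives $m_{x_{0}} \subseteq M$ and hence $M =m_{x_{0}} .$ Both arguments are sound. The paper's has the structural advantage of unifying the two cases --- the single computation ``$m$ not inside any $m_{x}$ implies $Fin(X) \subseteq m$'' yields the contradiction when $X$ is finite and simultaneously characterizes the extra maximal ideals when $X$ is infinite --- whereas your singleton argument is more self-contained for the finite case and identifies the specific $m_{x_{0}}$ directly rather than by contradiction. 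One citation slip worth fixing: you attribute ``maximal implies prime'' to Proposition 8(4), but that item asserts the converse (prime implies maximal in a Boolean ring); the fact you need is the general elementary fact listed in the bullet points of Section 2 of the paper.
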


\begin{proof}
Recall that for a set $A ,$ the characteristic function $\chi _{A}(x) =1$ if $x \in A ,$ and $\chi _{A}(x) =0$ otherwise.  Usually, we think of this as a real-valued function, but since the only two values of the function are $0$ and $1 ,$ we may as well consider it to be $\mathbb{Z}_{2}$ -valued.  Now by holding $x$ fixed and letting $A$ vary, we can regard this as a function on $\mathcal{P}(X)$ rather than on $X .$  That is, given a fixed $x \in X ,$ consider the function:
\begin{align}f_{x} :\mathcal{P}(X) \rightarrow \mathbb{Z}_{2} \\
f_{x}(A) =\chi _{A}(x) =\{\begin{array}{c}1\text{  if}\thinspace \text{}\text{}x \in A \\
0\text{  if}\text{}\thinspace x \notin A\end{array}\end{align}

We claim this is a surjective ring homomorphism.  Indeed, $f_{x}(A +B) =f_{x}((A \cup B) -(A \cap B)) =1$ precisely when $x$ belongs to exactly one of $A$ or $B$, but this is also the case for $f_{x}(A) +f_{x}(B) ,$ so $f_{x}$ preserves sums.  Similarly, $f_{x}(AB) =f_{x}(A \cap B) =1$ precisely when $x$ belongs to both $A$ and $B ,$ but this is also the case for $f_{x}(A)f_{x}(B) ,$ so $f_{x}$ preserves products as well.  The map is clearly surjective if $X$ is nonempty, since $f_{x}( \varnothing ) =0$ and $f_{x}(\{x\}) =1.$  It follows from the first isomorphism theorem that $\mathbb{Z}_{2} \approxeq R/\ker (f_{x}) ,$ and since $\mathbb{Z}_{2}$ is a field we obtain that $\ker (f_{x})$ is a maximal ideal.

Looking more carefully, we see that \begin{equation}\ker (f_{x}) =\{A \subseteq X\vert x \notin A\} =\mathcal{P}(X -\{x\}) =m_{x} ,
\end{equation}

the principal ideal of $R$ generated by $X -\{x\} .$  So we obtain one maximal (principal) ideal for each element $x \in X .$  Furthermore, if $x \neq y ,$ then $\{x\} \notin m_{x} ,$ but $\{x\} \in m_{y} ,$ so it follows that $m_{x} \neq m_{y}$ if $x \neq y .$

Now suppose $m$ is a maximal ideal in $R ,$ and $m$ is not contained in any $m_{x} .$  That means for each $x \in X ,$ $m$ contains a subset $A_{x}$ of $X$ which contains $x$.  Thus, $m$ contains the ideal generated by these $A_{x} :$ $(A_{x}\vert x \in X) \subseteq m .$  Therefore, any \textit{finite} sum of the form $\sum B_{x}A_{x}$ belongs to $m ,$ where $B_{x}$ is an arbitrary subset of $X .$  Taking $B_{x} =\{x\}$ and remembering that multiplication means intersection (and that $x \in A_{x}) ,$ this says that any finite sum of the form $\sum _{x \in X}\{x\}$ belongs to $m .$  In other words, every finite subset of $X$ belongs to $m$ (the sum of distinct singleton sets is their union because they are disjoint.)  We have shown $Fin(X) \subseteq m .$  Now if $X$ is finite, then $Fin(X) =\mathcal{P}(X)$ so in particular $X \in m$ so $m =R ,$ contradicting the fact that it is a maximal ideal.  Thus if $X$ is finite, no maximal ideals exist which are not contained in some $m_{x} .$  On the other hand, if $m \subseteq m_{x} ,$ then they are equal by maximality of $m$, whence for finite sets $X ,$ the ideals $m_{x}$ exhaust the maximal ideals of $R =\mathcal{P}(X) .$

However, if $X$ is infinite, then $Fin(X)$ is a proper ideal of $R ,$ and by lemma 3, it is contained in some maximal ideal $m$ of $R ,$ which clearly is not contained in any $m_{x} .$  Thus maximal ideals exist which are not of the form $m_{x}$ (in particular, they are not principal, hence not finitely generated.)  This completes the proof. \bigskip 
\end{proof}

In short, for power set rings $R =\mathcal{P}(X)$, we know that the maximal spectrum is the same as the prime spectrum, $Max(R) =Spec(R)$ (true for any Boolean ring as we showed above.)  In the case when $X$ is finite, the map $m_{x} \mapsto x$ gives a bijection of $Spec(R)$ with $X$ itself.  When $X$ is infinite, this map is only a bijection with the \textit{principal} prime ideals in $R ,$ and the full spectrum contains other (infinitely-generated) maximal ideals, all of which must contain $Fin(X)$. \bigskip  \bigskip

We are almost ready to prove the main result.  We need just one more general fact about prime ideals: \bigskip

\begin{lemma}
(Proposition 1.11, part (ii) of [AM])  Let $I_{1} ,I_{2} , . . . ,I_{n}$ be ideals in a commutative ring and suppose $P$ is a prime ideal containing $ \cap _{j =1}^{n}I_{j} .$  Then $P \supseteq I_{k}$ for some $k .$
\end{lemma}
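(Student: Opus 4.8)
The plan is to argue by contradiction, exploiting the prime condition in its contrapositive form. Suppose that $P$ contains none of the ideals $I_1, \ldots, I_n$. Then for each index $k$ the containment $I_k \subseteq P$ fails, so I can select an element $a_k \in I_k$ with $a_k \notin P$. The idea is then to manufacture a single element that lies in the intersection yet escapes $P$, which will contradict the hypothesis $\cap_{j=1}^n I_j \subseteq P$.

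The natural candidate is the product $a = a_1 a_2 \cdots a_n$. First I would verify that $a \in \cap_{j=1}^n I_j$: for each fixed $j$, the factor $a_j$ already belongs to $I_j$, and since $I_j$ is an ideal it absorbs multiplication by the remaining factors, so $a \in I_j$; as this holds for every $j$, we obtain $a \in \cap_{j=1}^n I_j \subseteq P$, and in particular $a \in P$. It then remains to derive a contradiction from primeness. Because the definition of a prime ideal given above concerns only a product of two elements, I would first promote it, by a short induction on the number of factors, to the statement that $a_1 a_2 \cdots a_n \in P$ implies $a_k \in P$ for at least one $k$ (the base case is the definition itself, and the inductive step writes $a_1 \cdots a_n = (a_1 \cdots a_{n-1}) a_n$ and applies the two-factor property once). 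Applying this to $a \in P$ forces some $a_k \in P$, contradicting the choice $a_k \notin P$. Hence $P$ must contain some $I_k$, as claimed.

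The argument is essentially routine; the only points requiring a moment's care are the promotion of the prime condition from two factors to $n$ factors, and the observation that the product $a$ genuinely lands in the intersection, which rests squarely on each $I_j$ being an ideal rather than a mere subset. I do not anticipate any real obstacle beyond this bookkeeping.
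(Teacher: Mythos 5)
Your proof is correct and follows essentially the same route as the paper's: choose $a_k \in I_k \setminus P$ for each $k$, observe the product lies in $\cap_{j=1}^{n} I_j \subseteq P$, and contradict primeness. The paper simply asserts that the product of elements outside a prime ideal stays outside it, whereas you carefully justify this by induction on the number of factors; that is a fleshing-out of the same step, not a different argument.
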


\begin{proof}
  Suppose, by way of contradiction, that no $I_{k} \subseteq P$.  Then for each $k ,$ there is an element $x_{k} \in I_{k}$ such that $x_{k} \notin P .$  Since $P$ is prime, it follows that the product $x_{1}x_{2} \cdot  . . . \cdot x_{n} \notin P .$  But this product belongs to $ \cap _{j =1}^{n}I_{j} \subseteq P ,$ a contradiction. \bigskip \bigskip 
\end{proof}

{\textbf{4. Proof of the Main Result}.}
We now turn to the proof of Theorem 1.  Let $X$ be a set and let $R =\mathcal{P}(X)$ be its power set ring.  Observe that the following equality holds for the zero ideal:\begin{equation}\text{ }(0) =\bigcap _{x \in X}m_{x}
\end{equation}

Indeed, a subset $A$ of $X$ which belongs to $m_{x}$    must not contain $x ,$ so if $A$ is a member of every \thinspace $m_{x} ,$ it must not contain any $x \in X ,$ whence $A = \varnothing $.  Thus, the only set in the right side is the empty set, which is the ideal $(0) .$  Now if $X$ is finite, then this is a reduced primary decomposition of $(0) ,$ since it is a finite intersection of maximal (hence primary) ideals, and $x \neq y$ forces $m_{x} \neq m_{y} ,$ so the decomposition is irredundant. Thus, $(0)$ has a primary decomposition in this case.  (Of course, since $R$ is Noetherian if $X$ is finite, the Lasker-Noether theorem directly implies $(0)$ has a primary decomposition, but here we have exhibited a specific one.)

Now if $X$ is infinite, this particular decomposition is not a reduced primary decomposition since it is infinite.  Moreover, the presence of maximal (hence primary) ideals which are not of the form $m_{x}$ appears to complicate the situation.  But suppose there exists a reduced primary decomposition:\begin{equation}(0) =\bigcap \limits _{j =1}^{n}Q_{j}
\end{equation}

For a fixed $x ,$ observe that $(0)$ $ \subseteq m_{x};$ that is, \begin{equation}m_{x} \supseteq \bigcap \limits _{j =1}^{n}Q_{j}
\end{equation}

Since $m_{x}$ is prime, it follows immediately from lemma 11 that $m_{x} \supseteq Q_{k}$ for some $k \in \{1 ,2 , . . . ,n\} .$  However, since $Q_{k}$ is primary, it is maximal by proposition 8, whence $Q_{k} =m_{x} .$ Since $k$ is determined by $x ,$ this defines a map : $\omega  :X \rightarrow \{1 ,2 , . . ,n\} .$  This map is well-defined because the decomposition of $(0)$ as the intersection of the $Q_{j}$'s is reduced - if two different $Q_{j}$ were equal to the same $m_{x} ,$ the decomposition would be redundant.  On the other hand, $\omega $ must be injective:  if $\omega (x) =\omega (y) =k ,$ then that means $m_{x} =Q_{k} =m_{y} ,$ whence $x =y$ by the last statement of proposition 10.  Thus we have an injective map from $X$ into a finite set $\{1 ,2 , . . . ,n\} ,$ whence $X$ is finite.  This completes the proof. \bigskip

A few remarks are in order.  First, although we are mostly interested in power set rings, many of these results hold in a general Boolean ring.  For example, in a Boolean ring, given an element $b ,$ define its \emph{complement} $b^{ \prime }$ to be $1 -b .$  Furthermore, it is always possible to define a partial order on a Boolean ring $R :$ define $a \preceq b$ to mean $ab =a .$ It is a basic exercise to show that this is a partial order and in the case when $R =\mathcal{P}(X)$, this partial order is just the containment of sets.  Relative to a partial order, the \emph{atoms} are defined to be the minimal nonzero elements.  So in $R =\mathcal{P}(X) ,$ where the partial order is given by containment, the atoms are the singleton sets.  Then the following generalization of our Proposition 10 holds: In a Boolean ring, the maximal principal ideals are precisely those generated by the complement of an atom.  (See [M], Theorem 5.21). \bigskip

Similarly, a weak version of our main result holds for a general Boolean ring.  First note that the observation made above that $1$ is the only unit in $R =\mathcal{P}(X)$ actually holds in any Boolean ring.  Indeed, if $a$ is a unit, there is a $b$ (also a unit) such that $ab =1.$  Multiply both sides by $a$ to get $a^{2}b =a ,$ but $a^{2}b =ab ,$ whence we obtain $ab =a .$  Since $ab =1 ,$ this gives $a =1 ,$ which in turn implies $b =1$ also.  

Thus, it follows that a principal ideal $(a)$ is proper if and only if $a$ is not a unit; that is, if and only if $a \neq 1$ which means $a^{ \prime } \neq 0.$  From this we can deduce that in any Boolean ring, $(0)$ is the intersection $H$ of all the maximal ideals.  Indeed, there is at least one maximal ideal (containing $(0)$) by lemma 3, so $H$ is nonempty.  So it is certainly true that $(0) \subseteq H .$ Conversely, suppose $x \neq 0.$  Then by the above remarks, $ <x^{ \prime } >$ is a proper ideal, whence it is contained in a maximal ideal $K ,$ again by lemma 3.  Thus, $x \notin K$ (otherwise both $x^{ \prime } =1 -x$ and $x$ belong to $K ,$ which would force $K =R ,$ a contradiction.)  In particular, since $x \notin K ,$ we also have $x \notin H ,$ which shows the opposite containment.  It follows that $(0) =H ,$ as claimed. \bigskip

The reason we call this a \textit{weak} version of our main result is because for a general Boolean ring, we have no idea if this is a finite intersection or an infinite one.  Of course, the homomorphic image of a Boolean ring is again a Boolean ring.  So if $I$ is any ideal in a Boolean ring and we apply the above to the ring $R/I ,$ we obtain immediately: \bigskip

\begin{corollary}
In any Boolean ring $R ,$ any ideal $I$ is equal to the intersection of all the maximal ideals which contain $I .$
\end{corollary}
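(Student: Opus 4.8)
The plan is to deduce the corollary from the ``weak version'' of the main result established just above --- namely, that in any Boolean ring the zero ideal is the intersection of all maximal ideals --- by transporting that statement along the quotient map $\pi :R \rightarrow R/I .$ Since the homomorphic image of a Boolean ring is again Boolean (if $y^{2} =y$ for every $y \in R$ and $\pi $ is onto, then for $\bar{z} =\pi (z)$ in $R/I$ we get $\bar{z}^{2} =\pi (z^{2}) =\pi (z) =\bar{z}$), the ring $R/I$ is Boolean, so the weak version applies to it verbatim.

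First I would record that, applied to $R/I ,$ the weak version gives
\begin{equation}
(0)_{R/I} =\bigcap _{\bar{m}}\bar{m} ,
\end{equation}
where $\bar{m}$ ranges over all maximal ideals of $R/I .$ Next I would invoke the standard correspondence (lattice isomorphism) theorem for the quotient $R/I :$ the ideals of $R/I$ are precisely the ideals $m/I$ with $m$ an ideal of $R$ containing $I ,$ and this bijection preserves inclusions and hence maximality. Consequently the maximal ideals $\bar{m}$ of $R/I$ are exactly the ideals $m/I$ for $m$ a maximal ideal of $R$ with $I \subseteq m ,$ and for each such $m$ we have $\pi ^{-1}(\bar{m}) =\pi ^{-1}(m/I) =m .$

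Finally I would pull the displayed equality back through $\pi .$ Taking preimages of both sides under $\pi ,$ and using the elementary fact that the preimage of an intersection is the intersection of the preimages, gives
\begin{equation}
I =\pi ^{-1}\big((0)_{R/I}\big) =\pi ^{-1}\Big(\bigcap _{\bar{m}}\bar{m}\Big) =\bigcap _{\bar{m}}\pi ^{-1}(\bar{m}) =\bigcap _{\substack{m \text{ maximal} \\ I \subseteq m}}m ,
\end{equation}
which is exactly the assertion of the corollary; here $\pi ^{-1}\big((0)_{R/I}\big) =I$ because $\ker \pi =I .$

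There is no serious obstacle here: the genuine content lies entirely in the weak version proved above, and what remains is bookkeeping. The only two points requiring any care are verifying that the correspondence theorem matches the maximal ideals of $R/I$ with the maximal ideals of $R$ containing $I$ (so that none are lost or spuriously introduced), and noting that taking preimages commutes with arbitrary intersections. Both are standard and --- crucially --- hold whether the indexing family is finite or infinite. That insensitivity to finiteness is precisely why this is only a \emph{weak} analogue of Theorem 1: the corollary exhibits $I$ as an intersection of primary (indeed maximal) ideals, but gives no control over whether that intersection can be taken finite and reduced.
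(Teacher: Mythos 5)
Your proof is correct and follows exactly the paper's own route: the paper obtains this corollary ``immediately'' by noting that the homomorphic image of a Boolean ring is Boolean and applying the preceding fact that $(0)$ is the intersection of all maximal ideals to the quotient $R/I .$ You have simply made explicit the bookkeeping (the correspondence theorem for ideals of $R/I$ and the compatibility of preimages with intersections) that the paper leaves implicit.
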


This is also a known result - see for example exercise 5.34 in [M]. \bigskip

Can we extend the primary decomposition of $(0)$ to other ideals in $\mathcal{P}(X)$?  We can if the ideal is principal and is generated by a \emph{cofinite} set (that is, a set whose complement is finite.)

\begin{exercise}
Show that if $A \subseteq X ,$ then $\mathcal{P}(X)/\mathcal{P}(A)$$ \approxeq \mathcal{P}(X -A)$.  \textbf{[Hint}: Define a map $\mathcal{P}(X) \rightarrow \mathcal{P}(X -A)$ by simply dropping the elements of $A$ from the sets.  Show this is a surjective ring homomorphism with kernel $\mathcal{P}(A)$.] \bigskip 
\end{exercise}

Then we have:

\begin{corollary}
In the power set ring $\mathcal{P}(X) ,$ any principal ideal $I =(A)$ is the intersection of the maximal principal ideals which contain $I .$  If $A$ is cofinite, this gives a primary decomposition of $I$ using only principal maximal ideals.
\end{corollary}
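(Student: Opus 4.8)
The plan is to identify explicitly which maximal principal ideals contain $(A)$ and then to compute their intersection by a direct element chase, rather than invoking the more abstract Corollary 13. By Exercise 9 we have $(A)=\mathcal{P}(A)$, and by Proposition 10 (together with the remark that principal maximal ideals are exactly those generated by the complement of an atom) the maximal principal ideals of $R=\mathcal{P}(X)$ are precisely the $m_x=\mathcal{P}(X-\{x\})$ for $x\in X$. First I would observe that $m_x\supseteq(A)=\mathcal{P}(A)$ holds if and only if $\mathcal{P}(A)\subseteq\mathcal{P}(X-\{x\})$, which is equivalent to $A\subseteq X-\{x\}$, i.e. to $x\notin A$. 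Hence the maximal principal ideals containing $(A)$ are exactly the $m_x$ with $x\in X-A$.

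Next I would compute $\bigcap_{x\in X-A}m_x$ directly. A subset $B\subseteq X$ lies in this intersection exactly when $x\notin B$ for every $x\in X-A$, that is, when $B\cap(X-A)=\varnothing$, equivalently $B\subseteq A$. Thus $\bigcap_{x\in X-A}m_x=\mathcal{P}(A)=(A)$, which establishes the first assertion. It is worth noting that this is slightly stronger than the specialization of Corollary 13 to $I=(A)$, since here only the principal maximal ideals are used and none of the non-principal maximal ideals (which exist when $X$ is infinite) are needed.

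For the second assertion, suppose $A$ is cofinite and proper, so that $X-A$ is finite and nonempty. Then the intersection above is finite; each $m_x$ is maximal, hence primary by Proposition 8; and the radicals $\sqrt{m_x}=m_x$ are distinct for distinct $x$ by Proposition 10. To confirm that this is a \emph{reduced} decomposition it remains only to verify irredundancy, namely that no single factor may be dropped. For a fixed $x_0\in X-A$ I would repeat the element chase to get $\bigcap_{x\in X-A,\,x\neq x_0}m_x=\mathcal{P}(A\cup\{x_0\})$, which properly contains $\mathcal{P}(A)=(A)$ since it contains the singleton $\{x_0\}\notin(A)$. Hence omitting $m_{x_0}$ enlarges the intersection, so no factor is superfluous and the decomposition is reduced.

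I do not expect a genuine obstacle here: both assertions reduce to the two element chases $\bigcap_{x\in X-A}m_x=\mathcal{P}(A)$ and $\bigcap_{x\neq x_0}m_x=\mathcal{P}(A\cup\{x_0\})$. The only points requiring care are the bookkeeping of the irredundancy condition and the degenerate case $A=X$, where $(A)=R$ is the improper (unit) ideal; the statement should be read as applying to proper principal ideals, so that $X-A$ is a nonempty finite index set.
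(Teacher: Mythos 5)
Your proof is correct, and it takes a genuinely different route from the paper's. The paper deduces the corollary from machinery it has already built: by Exercise 13, $\mathcal{P}(X)/\mathcal{P}(A) \approxeq \mathcal{P}(X-A)$ is again a power set ring, so the decomposition of the zero ideal from the proof of the main theorem applies in that quotient, and the paper then pulls this decomposition back to $\mathcal{P}(X)$, obtaining $(A)$ as the intersection of the $m_x$ containing it; finiteness of $X-A$ then gives the primary decomposition. You instead stay inside $\mathcal{P}(X)$ and do everything by hand: you identify the maximal principal ideals containing $(A)$ as exactly the $m_x$ with $x \in X-A$, and compute $\bigcap_{x \in X-A} m_x = \mathcal{P}(A)$ by a direct element chase. (One bookkeeping slip: the abstract result you contrast your argument with is Corollary 12; item 13 in the paper is the exercise on quotients.) The trade-off: the paper's proof is two sentences long because it reuses Corollary 12, Exercise 13, and the main theorem, and it showcases the technique of transporting a decomposition of $(0)$ through a quotient; your argument is longer but entirely self-contained --- it needs neither the quotient isomorphism nor the correspondence between ideals of $R/I$ and ideals containing $I$, on which the paper's ``pulls back'' step quietly relies. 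Your version also proves strictly more than the paper's proof records: the computation $\bigcap_{x \in X-A,\, x \neq x_0} m_x = \mathcal{P}(A \cup \{x_0\}) \supsetneq \mathcal{P}(A)$ shows that no factor can be omitted, so the decomposition is \emph{reduced} (irredundant, with distinct radicals $\sqrt{m_x} = m_x$); the paper never checks this, yet it is exactly what Remark 15 needs in order to invoke uniqueness of reduced decompositions. Your flagging of the degenerate case $A = X$, where $(A) = R$ is not proper and the primary-decomposition claim must be read as vacuous, is likewise a point the paper passes over.
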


\begin{proof}
We know every principal ideal generated by $A$ has the form $\mathcal{P}(A)$. We can refine the result of Corollary 12 where instead of using all maximal ideals containing $(0)$ in the quotient ring, we just use the principal ones, because by exercise 13, the quotient is not just a Boolean ring but also a power set ring.  Thus, $(0)$ is the intersection of all the maximal principal ideals in $\mathcal{P}(X -A)$ by our proof of the main result.  Then this pulls back in $\mathcal{P}(X)$ to writing $I =(A) =\mathcal{P}(A)$ as the intersection of the $m_{x}$ which contain $I .$  If $X -A$ is finite, this intersection is finite and hence gives a primary decomposition of $I$. \bigskip 
\end{proof}

\begin{remark}
We mentioned in the remarks following Theorem 7 that in any ring in which every prime ideal is maximal, the primary ideals which appear as factors in a primary decomposition must be unique.  Hence, part (4) of Proposition 8 implies that the primary decomposition of $I$ given in Corollary 14 is in fact the unique primary decomposition of $I$.
\end{remark}

{\textbf{5. Conclusion}. }
Looking back at our discussion in the introduction, it appears that we have succeeded in our goal to find a simpler counterexample to the Lasker-Noether theorem in non-Noetherian rings than the one that is usually cited.  By using the power set ring instead of the ring of continuous functions on a compact Hausdorff space, we seem to have avoided using any topology.  (The proof that $x \neq y$ implies $m_{x} \neq m_{y}$ was trivial in our example rather than dependent on Urysohn's lemma.) Furthermore, since our ring was Boolean, we didn't need to worry about distinguishing between primary ideals, prime ideals and maximal ideals - they were all one and the same.  So perhaps this counterexample is about as simple as one could get. \bigskip

Nevertheless, there is a sense in which our counterexample was inspired by the usual one, and indeed is almost a version of the same example.  For concreteness, if the compact Hausdorff space was some closed interval $I$ of the real line, you would not find your counterexample in the polynomial ring $\mathbb{R}[x]$(regarded as functions on $I)$, since by Hilbert's basis theorem, this ring is Noetherian!  What seems to be needed is a ring with more functions than just the polynomials, so there is a chance that the ring in question can be non-Noetherian.  Hence the appeal the the ring of all continuous functions $I \rightarrow \mathbb{R} ,$ which is quite a bit larger than $\mathbb{R}[x] .$  From this perspective, one can understand that to generalize this as much as possible, one way to go is to replace $I$ with something more general - the compact Hausdorff space $X$ in the counterexample in [AM]. \bigskip

But there is another way in which we can generalize this.  If the intention is to have a ring of functions which is `large enough to be non-Noetherian', why restrict to continuous functions at all?  Wouldn't the ring of \textit{all} functions $X \rightarrow \mathbb{R}$ suit just as well, since it is even larger than the ring of continuous functions?  (And we wouldn't have to worry about any topology since the functions need not be continuous.)  At the same time, the definition (see part (i) of the counterexample in the introduction above) of the $m_{x}$ relies only on knowing if a particular function is $0$ or not at a point $x$ - this is a binary choice, so maybe we don't care that the functions be real-valued.  Perhaps we can get away with $\mathbb{Z}_{2}$ -valued functions. \bigskip

This line of reasoning leads one to try the following example:  Let $X$ be any set, and let $S =Fun(X ,\mathbb{Z}_{2}) ,$ the ring of all functions $X \rightarrow \mathbb{Z}_{2} .$  If $X$ is infinite, we'd expect to get the same kind of counterexample as in the compact Hausdorff case for continuous functions.\bigskip

The astute reader has probably already seen the conclusion to where we are heading.  It is well known and an elementary exercise that $R =\mathcal{P}(X)$ is isomorphic to $S =Fun(X ,\mathbb{Z}_{2})$, under the isomorphism given by $A \mapsto \chi _{A} ,$ pairing a set with its characteristic function, and that under this pairing, it is easy to check that the $m_{x}$ as defined in Proposition 10 is carried over to the $m_{x}$ given in part (i) of the counterexample in the introduction above.  So we are led naturally to considering power set rings, and more generally, Boolean rings.\bigskip

\begin{center}{\Large References}\bigskip \end{center}\par
[AM]\textit{ Introduction to Commutative Algebra}, M.F. Atiyah and I.G. MacDonald, Addison-Wesley, 1969 \bigskip

[G] \textit{Contemporary Abstract Algebra}, 8th Edition, Joseph A. Gallian, Brooks-Cole, 2013 \bigskip

[H] \textit{Algebra}, T. Hungerford, Springer, 1974 \bigskip

[M] \textit{Schaum's Outline - Theory and Problems of Boolean Algebras and Switching Circuits}, E. Mendelson, McGraw-Hill, 1970
\bigskip

[N] \textit{Ideal Theory}, D.G. Northcott, Cambridge University Press, 1965 \bigskip  \bigskip

[ZS] \textit{Commutative Algebra, Vol. I}, O.Zariski and P. Samuel, van Nostrand, 1958.

\end{document}